\theoremstyle{plain}
\newtheorem{definition}{Definition}[section]
\newtheorem{theorem}[definition]{Theorem}
\newtheorem{lemma}[definition]{Lemma}
\newtheorem{corollary}[definition]{Corollary}
\newtheorem{proposition}[definition]{Proposition}
\newtheorem*{theorem*}{Theorem}
\theoremstyle{definition}
\newtheorem{remark}[definition]{Remark}
\newtheorem*{remark*}{Remark}
\newtheorem*{remarks*}{Remarks}
\numberwithin{equation}{section}
\def\N{{\mathbb N}}
\def\Z{{\mathbb Z}}
\def\R{{\mathbb R}}
\def\T{{\mathbb T}}
\def\C{{\mathbb C}}
\def\Q{{\mathbb Q}}
\newcommand{\Rd}{{\R^d}}
\newcommand{\Rdd}{{\R^{2d}}}
\newcommand{\Rddd}{{\R^{4d}}}
\newcommand{\Sd}{{\mathcal{S}(\Rd)}}
\newcommand{\Sdd}{{\mathcal{S}(\Rdd)}}
\newcommand{\Sdp}{{\mathcal{S}'(\Rd)}}
\newcommand{\Sddp}{{\mathcal{S}'(\Rdd)}}
\newcommand{\ltd}{{L^2(\R^d)}}
\newcommand{\ltdd}{{L^2(\Rdd)}}
\newcommand{\rep}{{\mathfrak{R}}}
\newcommand{\Trep}{{\mathfrak{T}}}
\newcommand{\J}{{\mathcal{J}}}
\newcommand{\A}{\mathcal{A}}
\newcommand\thankssymb[1]{\textsuperscript{\@fnsymbol{#1}}}
\newcommand{\Sp}{{\mathrm{Sp}(4d,\R)}}
\newcommand{\Spp}{{\mathrm{Sp}(2d,\R)}}
\def\@makefnmark{%
  \leavevmode
  \raise.9ex\hbox{\fontsize\sf@size\z@\normalfont\tiny\@thefnmark}}
\def\bign#1{\mathclose{\hbox{$\left#1\vbox to8.5\p@{}\right.\n@space$}}\mathopen{}}
\newcommand{\norm}[1]{\lVert #1\rVert}
\newcommand{\tfa}{time-frequency analysis}
\newcommand{\tfr}{time-frequency representation}
\newcommand{\tfs}{time-frequency shift}
\newcommand{\ft}{Fourier transform}
\newcommand{\stft}{short-time Fourier transform}
\newcommand{\tf}{time-frequency}
\newcommand{\psdo}{pseudodifferential operator}
\newcommand{\bR}{\R}        
\newcommand{\bN}{\N}        
\newcommand{\bQ}{\Q}        
\def\rd{\bR^d}
\def\rdd{{\bR^{2d}}}
\def\lrd{L^2(\rd)}
\def\inv{^{-1}}
\begin{document}
\begin{abstract}
We characterize all time-frequency representations that satisfy a
general covariance property: 
any weak*-continuous bilinear mapping that intertwines
time-frequency shifts on the configuration space with time-frequency shifts on phase space is a multiple of a metaplectic time-frequency representation. 
\end{abstract}

\title[A characterization of metaplectic time-frequency representations]{A Characterization of Metaplectic Time-Frequency Representations}

\author{Karlheinz Gr\"ochenig}
\address{Faculty of Mathematics \\
University of Vienna \\
Oskar-Morgenstern-Platz 1 \\
A-1090 Vienna, Austria}
\email{karlheinz.groechenig@univie.ac.at}
\author{Irina Shafkulovska}
\address{Faculty of Mathematics \\
University of Vienna \\
Oskar-Morgenstern-Platz 1 \\
A-1090 Vienna, Austria}
\email{irina.shafkulovska@univie.ac.at}

\subjclass[2020]{81S30, 22E46}
\keywords{Time-frequency representations, metaplectic operators, symplectic group, Schwartz kernel theorem.}
\thanks{I.~Shafkulovska was funded in part by the Austrian Science Fund (FWF) [\href{https://doi.org/10.55776/P33217}{10.55776/P33217}] and [\href{https://doi.org/10.55776/Y1199}{10.55776/Y1199}]. 
 For open access purposes, the authors have applied a CC BY public copyright license to any author-accepted manuscript version arising from this submission.}

\maketitle

\section{Introduction} 

Many important \tfr s possess the covariance property: a \tfs\ of a
function is transformed to a shift of the \tfr . For one of the most
central \tfr s, namely the Wigner distribution, this property looks as
follows.  Let 
  \begin{equation}
     W(f,g)(x,\omega) =  \int_\Rd f(x+\tfrac{t}{2})
     \overline{g(x-\tfrac{t}{2})}e^{-2\pi i \omega\cdot t}\, dt,\quad
     x,\omega\in\Rd \, ,
 \end{equation}
be the (cross-) Wigner distribution of two functions $f,g \in \lrd
$. The translation operator or shift on $\lrd $ is defined  by $T_xf(t) =
f(t-x)$ for  $t,x,\omega \in \rd $, and  
 the frequency shift or modulation operator is given by  $M_\omega f(t) = e^{2\pi i \omega \cdot
  t} f(t) $. Its \ft\ is $\widehat{M_\omega f} = T_{-\omega }\hat{f}$, whence
frequency shift. Writing $\lambda = (x,\omega )$ for a point in the
\tf\ space, the  symmetrized composition is the \tfs\
$$
\rho(\lambda) =\rho(x,\omega) = T_{x/2} M_\omega T_{x/2},\quad \lambda
= (x,\omega)\in\Rdd\, .
$$ 
Then the covariance property states that for all $\lambda, z \in \rdd,
f,g\in \lrd $
\begin{equation}
  \label{eq:n4}
W(\rho (\lambda )f, \rho (\lambda )g)(z) = W(f,g)(z-\lambda ) =
(T_\lambda W(f,g))(z) \, .  
\end{equation}
Many other useful  \tfr s possess this useful and intuitive property, for instance, the
spectrogram or the Born-Jordan distribution.
Therefore, covariant \tfr s have been studied
intensely in the engineering literature and in physics, and ultimately
were completely classified by L.\ Cohen~\cite{cohen66,cohen}.  If a
\tfr\ $\rep (f,g)$  is covariant, then $\rep $ must have the form
$\rep (f,g) = W(f,g) \ast \Psi $ for some distribution $\Psi \in \Sddp$. See also \cite[Sec.~4.5]{Groechenig2001} for a mathematical treatment of
Cohen's class.

A second look at the Wigner distribution reveals a more delicate structure. 
By viewing the Wigner distribution as a sequilinear mapping, we can apply
different \tfs s to $f$ and $g$ and obtain the following
formula~(e.g., \cite[Prop.~1.94]{Folland1989} or \cite[Prop.~4.3.2]{Groechenig2001})
\begin{align} 
  W(\rho (\lambda )f, \rho (\mu )g)(z) &=  e^{\pi i \lambda \cdot \J \mu
    } e^{2\pi i z \cdot
    \J(\lambda - \mu )}  W(f,g)\big(z- \frac{\lambda
    +\mu}{2}\big) \notag \\
  &= \rho \Big( \tfrac{\lambda +\mu }{2}, \J (\lambda -\mu) \Big) W(f,g)(z) \, ,   \label{eq:n2}
\end{align}
with $\J = \begin{psmallmatrix}
    0 & I_d \\ -I_d & 0
           \end{psmallmatrix}$.
Thus the Wigner distribution with two \emph{different} \tfs s is  not
only 
a shift  of the  Wigner distribution, but also contains a modulation, in other words, the result is a \tfs\ of
    the original Wigner distribution. Formula~\eqref{eq:n2} is a general
    covariance property, and it says that the transform of \tfs s is
    again a \tfs .  Note, however, that on the
    right-hand side the \tfs\ has twice as many variables. The general
    covariance property \eqref{eq:n2} is a fundamental property of the Wigner
    distribution and plays an important role  in the \tf\ approach
    to \psdo s, see e.g., ~\cite{gro06} and the recent
    monograph~\cite{CR20}.

    We say that a \tfr  ,  i.e., a bilinear mapping $\rep : \Sd \times
    \Sd \to \Sddp $, possesses the general covariance property, if it
    maps \tfs s of $f,g\in \Sd$ to a \tfs\ of $\rep (f,g)$. 
    Formally,
    there exists a function $\Phi _{\rep} : \bR ^{4d } \to  \bR ^{4d }
    $, such that for all $f,g\in \Sd$ and all $\lambda , \mu \in \rdd
    $ one has  
\begin{equation} \label{eq:vertauschung}
  \rep(\rho(\lambda)f,\rho(\gamma)g) = c
  \rho(\Phi_\rep(\lambda,\gamma)) \rep(f,g). 
    \end{equation}
 Note that in this definition  $\rho $ occurs with different dimensions. 
 On the left-hand side 
 $\rho (\lambda
 )f$  acts on a function of $d$ variables, on the right hand side
 $\rho (\Phi _{\rep} (\lambda , \mu ))$ acts on a distribution on  $\bR^{2d} $. 
 In the sequel we will use only the latter version, therefore, we will not introduce a different notation.

It is natural to assume that $\Phi$ is measurable. 
In  this paper, we derive a complete classification of all \tfr s  that satisfy the
general covariance property~\eqref{eq:vertauschung}.   

For the formulation of the main result, we recall the definition of a
metaplectic operator. By the theorem of Stone-von Neumann~\cite{Folland1989} for every
symplectic matrix $\A \in \Spp $ there  exists a unitary
operator $\hat {\A }$ on $\lrd $, such that
\begin{equation}
    \rho(\A\lambda) = \hat \A\rho(\lambda)\hat\A^{-1}, 
\end{equation}
that is, $\hat\A$ arises as an intertwining operator, and is called
the \emph{metaplectic operator} associated to $\A$.   

With these preparations, the main result about \tfr s obeying the general covariance property goes as follows.

\begin{theorem}[Main result]\label{thm:main} 
    Let $\rep:\Sd\times \Sd\to \Sddp$ be a non-zero, bilinear,
    separately weak*-continuous mapping satisfying the general
    covariance property ~\eqref{eq:vertauschung} with a measurable function $\Phi$. 
    Then there exist $a\in\C$, $a\neq 0$, and $\A\in\Sp$ such that   
    \begin{equation} 
        \rep(f,g) = a\, \hat\A (f\otimes g),\qquad f,g\in\Sd.
    \end{equation}
  \end{theorem}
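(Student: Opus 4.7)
The plan is to split the proof into two stages. Stage~I exploits the algebraic structure of time-frequency shifts to show that the function $\Phi$ must come from a symplectic matrix $\A\in\Sp$. Stage~II applies Stone--von Neumann together with the Schwartz kernel theorem and a Schur-type rigidity to conclude.

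For Stage~I, I would evaluate $\rep(\rho(\lambda_1+\lambda_2)f,\rho(\gamma_1+\gamma_2)g)$ in two ways: directly via the covariance \eqref{eq:vertauschung}, and alternatively by first using the projective composition $\rho(\mu)\rho(\nu)=e^{i\pi\sigma(\mu,\nu)}\rho(\mu+\nu)$ to split each shift and then applying \eqref{eq:vertauschung} twice. Since $\rho(z)=c\,\rho(z')$ forces $z=z'$, comparing the two expressions yields the Cauchy equation $\Phi((\lambda_1,\gamma_1)+(\lambda_2,\gamma_2))=\Phi(\lambda_1,\gamma_1)+\Phi(\lambda_2,\gamma_2)$; combined with the measurability assumption, this gives $\R$-linearity of $\Phi$. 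Next, by swapping the order of $(\lambda_1,\gamma_1)$ and $(\lambda_2,\gamma_2)$ in the same evaluation and matching the Weyl commutator phases that appear on both sides of \eqref{eq:vertauschung}, I would conclude that $\Phi$ carries the direct-sum symplectic form $\sigma_d\oplus\sigma_d$ on $\R^{2d}\oplus\R^{2d}$ to the standard symplectic form on $\R^{4d}$, viewed as the phase space of $L^2(\R^{2d})$. Up to an obvious permutation of coordinates these forms coincide, so $\Phi$ is identified with an element $\A\in\Sp(4d,\R)=\Sp$.

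For Stage~II, Stone--von Neumann provides a metaplectic operator $\hat\A$ on $L^2(\R^{2d})$ with $\rho(\A w)=\hat\A\rho(w)\hat\A^{-1}$. Setting $\rep'(f,g):=\hat\A^{-1}\rep(f,g)$, the Schwartz kernel theorem promotes this separately weak*-continuous bilinear map to a unique continuous linear operator $T:\Sdd\to\Sddp$ with $T(f\otimes g)=\rep'(f,g)$. Using the tensor-product identity $\rho(\lambda)f\otimes\rho(\gamma)g=\rho((\lambda,\gamma))(f\otimes g)$ (up to a phase), the covariance becomes $T\rho(w)=c(w)\rho(w)T$ on tensors and, by weak*-continuity, extends to all of $\Sdd$. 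Checking that the projective cocycles match on both sides forces $c$ to collapse to a global constant that can be absorbed. Irreducibility of the Schr\"odinger representation on $L^2(\R^{2d})$, combined with a distributional Schur lemma for weak*-continuous intertwiners $\Sdd\to\Sddp$, then yields $T=a\,\mathrm{id}$ for some $a\in\C$, whence $\rep(f,g)=a\,\hat\A(f\otimes g)$.

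The hardest step I anticipate is the distributional Schur rigidity in Stage~II: one must verify that a weak*-continuous operator $\Sdd\to\Sddp$ commuting with all time-frequency shifts is a scalar multiple of the canonical inclusion. I expect this to follow by computing the Schwartz kernel of $T$ and exploiting invariance under $\rho(w)$ for every $w\in\R^{4d}$, but it requires care in passing from simple tensors (only weakly dense in $\Sdd$) to arbitrary Schwartz functions. A secondary subtlety is the bookkeeping of Weyl cocycles in Stage~I and the beginning of Stage~II, so that the scalar factor $c(w)$ truly collapses to a constant and does not accumulate into an extra character that would spoil the final form of $\rep$.
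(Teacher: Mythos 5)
Your overall architecture (first pin down $\Phi$ as a symplectic linear map, then reduce to a Schur-type rigidity via Stone--von Neumann and the kernel theorem) matches the paper, and your Stage~II is essentially the paper's endgame: conjugate by $\hat\A^{-1}$, observe that the resulting operator commutes with all time-frequency shifts, and identify it as a convolution operator whose kernel is supported at the origin. But Stage~I has a genuine gap at its very first step. You claim that ``$\rho(z)=c\,\rho(z')$ forces $z=z'$'' and use this to read off the Cauchy equation for $\Phi$. The identity you actually obtain from the double application of \eqref{eq:vertauschung} is
\begin{equation}
\rho\bigl(\Phi(\lambda_1+\lambda_2,\gamma_1+\gamma_2)\bigr)\rep(f,g)
= c'\,\rho\bigl(\Phi(\lambda_1,\gamma_1)+\Phi(\lambda_2,\gamma_2)\bigr)\rep(f,g),
\end{equation}
which holds only on the range of $\rep$, not as an operator identity on $\Sddp$. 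A time-frequency shift $\rho(\nu)$ with $\nu\neq 0$ can perfectly well act as a scalar on a proper invariant subspace (periodic distributions are eigendistributions of translations), and a priori nothing guarantees that the range of $\rep$ is dense or contains a nonzero element of some $L^p$. So at this stage you may only conclude that $\Phi(\lambda+\mu)-\Phi(\lambda)-\Phi(\mu)$ lies in the \emph{projective kernel} $H$ of $\rho$ restricted to $\overline{\mathrm{ran}\,\rep}$, i.e.\ that $q_H\circ\Phi$ is additive modulo $H$, not that $\Phi$ itself is additive.

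Closing this gap is where the paper's real work lies: it shows $H$ is a closed subgroup, that $q_H\circ\Phi$ is an injective measurable (hence continuous, hence analytic) homomorphism from $\R^{4d}$ to $\R^{4d}/H$, and then a Lie-algebra dimension count plus simple-connectedness of $\R^{4d}$ forces $H=\{0\}$, whence $\Phi$ is additive and, being measurable, linear. The paper explicitly notes in its final remarks that your shortcut works only under extra hypotheses (a nonzero image in some $L^p(\R^{2d})$, or weak$^*$-density of the range), which in the end are corollaries of the theorem rather than available inputs. The remainder of your outline --- the integrality obstruction $[\lambda,\mu]-[\A\lambda,\A\mu]\in\Z$ collapsing to $0$ once linearity and continuity are in hand, and the distributional Schur lemma --- is sound and coincides with the paper's treatment, though you leave the Schur step as an acknowledged to-do where the paper supplies a complete argument via the characterization of translation-invariant operators as convolutions.
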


The general covariance property seems to be very strong. Although we
start with a mapping from $\Sd \times \Sd \to \Sddp$, the actual
mapping properties are much stronger. 
  
\begin{corollary}\label{cor:from_main}
    Let $\rep:\Sd\times \Sd\to \Sddp$. Under the assumptions of
    Theorem~\ref{thm:main},  $\rep$ has the following properties. 
    \begin{enumerate}[(a)] 
    \item The function $\Phi_\rep$ is unique and linear.
    \item   $\rep$ maps  $\Sd\times \Sd$ into  $\Sdd$.
    \item $\rep$ is non-degenerate. 
    \item $\rep$ extends to an  isometry $\rep:\ltd\times\ltd\to\ltdd$, i.e., 
    \begin{equation}
        \norm{\rep (f,g)}_2 = |a|\,\norm{f}_2\norm{g}_2,\quad f,g\in\ltd. 
      \end{equation}
    \item The range $\{ \rep (f,g): f,g\in \Sd \}$ spans a dense
      subspace of $\Sdd $, of $\lrd $, and a weak$^*$-dense subspace
      of $\Sddp$. 
    \end{enumerate}
\end{corollary}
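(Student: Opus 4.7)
The plan is to apply Theorem~\ref{thm:main} to bring $\rep$ into the canonical form $\rep(f,g)=a\,\hat{\A}(f\otimes g)$, and then read off each property from standard facts about metaplectic operators and tensor products.

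For (a), I would first record the tensor-product identity
\begin{equation}
\rho_{2d}\bigl(P(\lambda,\mu)\bigr)(f\otimes g) = \rho_d(\lambda)f\otimes \rho_d(\mu)g,
\end{equation}
where $P\colon\R^{4d}\to\R^{4d}$ is the linear coordinate permutation $(x_1,\omega_1,x_2,\omega_2)\mapsto (x_1,x_2,\omega_1,\omega_2)$. This identity is a direct unpacking of $\rho(x,\omega)=T_{x/2}M_\omega T_{x/2}$ applied to a simple tensor on $\R^{2d}$. Combining with the metaplectic intertwining $\hat{\A}\rho_{2d}(\cdot) = \rho_{2d}(\A\cdot)\hat{\A}$ yields
\begin{equation}
\rep(\rho(\lambda)f,\rho(\mu)g) = a\,\hat{\A}\rho_{2d}\bigl(P(\lambda,\mu)\bigr)(f\otimes g) = \rho_{2d}\bigl(\A P(\lambda,\mu)\bigr)\rep(f,g),
\end{equation}
so $\Phi_\rep = \A\circ P$, which is linear. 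Uniqueness of $\Phi_\rep$ follows from the fact that if two candidates and their associated constants both satisfy \eqref{eq:vertauschung}, then $\rho(\eta)\rep(f,g)=d\,\rep(f,g)$ for all $f,g\in\Sd$, where $\eta$ is the difference of the two candidates; density of the range (item (e)) then forces $\rho(\eta)=dI$ on $\Sddp$, hence $\eta=0$.

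Parts (b)--(e) are then essentially bookkeeping. For (b), metaplectic operators preserve $\Sdd$, and $f\otimes g\in\Sdd$ whenever $f,g\in\Sd$. For (c), the injectivity of $\hat{\A}$ together with the fact that $f\otimes g=0$ iff $f=0$ or $g=0$ gives non-degeneracy. For (d), the unitarity of $\hat{\A}$ on $\ltdd$ combined with the tensor identity $\|f\otimes g\|_2=\|f\|_2\|g\|_2$ gives the claimed isometry on $\Sd\times\Sd$, which extends to $\ltd\times\ltd$ by the density of $\Sd$ in $\ltd$. For (e), the finite linear span of simple tensors $f\otimes g$ with $f,g\in\Sd$ is dense in $\Sdd$, in $\ltdd$, and weak*-dense in $\Sddp$; since $\hat{\A}$ is a topological automorphism of each of these three spaces (a homeomorphism of $\Sdd$, unitary on $\ltdd$, and a weak*-homeomorphism of $\Sddp$ by transposition), the images of these spanning sets remain dense.

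The only step that requires any direct calculation is verifying the tensor-shift identity used in (a); beyond that, the corollary is a matter of unpacking Theorem~\ref{thm:main}. A minor subtlety is the mild circularity between the uniqueness claim in (a) and the density in (e), but this is harmless: (e) depends only on the explicit form given by Theorem~\ref{thm:main} and not on uniqueness of $\Phi_\rep$, so one proves (e) first and then invokes it in (a).
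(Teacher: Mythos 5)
Your proposal is correct and follows the route the paper intends: the paper gives no separate proof of the corollary, treating all five items as direct consequences of the explicit form $\rep(f,g)=a\,\hat\A(f\otimes g)$ from Theorem~\ref{thm:main} (its only explicit remark, that Lemma~\ref{lem:injective} yields (c), is consistent with your argument via injectivity of $\hat\A$ on simple tensors). Your handling of the ordering between (a) and (e) — establishing weak*-density of the span of the range first and then using it to force uniqueness of $\Phi_\rep$ — is exactly the right way to close the one nontrivial loop.
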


Note that we only  assume that $\Phi _\rep $ is
measurable! 

It turns out that \tfr s of the form   $\hat{\A} (f\otimes \bar{g})$
were recently introduced by Cordero and
Rodino~\cite{CorderoEtAl2022} as a generalization of the Wigner
distribution, the \stft , 
or Beyer's \tfr s~\cite{bayer10}. They are called  metaplectic \tfr s or
the metaplectic Wigner distributions associated to $\A$, or shortly,
the $\A$-Wigner distributions.  
In a series of papers, Cordero, Rodino, and their
collaborators ~\cite{CorderoRodino2023,CorderoEtAl2024,CorderoRodino2022} have shown that metaplectic \tfr s are a versatile tool for the refined investigation of \psdo s and function spaces. 
Uncertainty principles for metaplectic \tfr s were investigated
in~\cite{GroechenigShafkulovska2024,GroechenigShafkulovska2025}.

Theorem~\ref{thm:main} provides an intrinsic, structural argument as
to why metaplectic \tfr s are natural and are bound to appear in \tfa. 

\begin{remark}
Usually, a \tfr\ is treated as a sesquilinear form rather than 
as a bilinear form.  
 Theorem \ref{thm:main} can be equivalently restated in terms of
 sesquilinear forms by a tiny modification. If $\rep $ is bilinear,
 then 
 the corresponding sesquilinear form  $\overline{\rep}(f,g)\coloneqq \rep(f,\overline{g})$
is non-zero, continuous, and satisfies
\begin{equation*}
    \overline{\rep}(\rho(\lambda)f,\rho(\gamma)g) = c
    \rho(\Phi_{\overline \rep}(\lambda,\gamma)) \overline{\rep}(f,g), 
    \quad 
    \Phi_{\overline{\rep}}(\lambda_1,\lambda_2,\gamma_1,\gamma_2) =
    \Phi_{{\rep}}(\lambda_1,\lambda_2,\gamma_1,-\gamma_2). 
\end{equation*}
The change of the intertwining function is due to
$\overline{\rho(\gamma_1,\gamma_2) g} = \rho(\gamma_1,-\gamma_2)
\overline{g}$. The conclusion of Theorem~\ref{thm:main}  $\overline{\rep} (f,g) = a \, \hat{\A }
(f \otimes \bar{g})$ remains the same. 
\end{remark}

The remainder of the paper is devoted to the proof of Theorem~\ref{thm:main}. 
We first use the continuity assumptions of $\rep$ to identify it with a continuous linear mapping $\Trep:\Sdd\to\Sddp$ that satisfies a general covariance property. 
Here, the arguments are analytical and are based on a bilinear
version of the Schwartz kernel theorem.
In the next step, we exploit the underlying Lie group structure of the problem to  
deduce that $\Phi $ must be a linear
mapping.
Finally, 
the general covariance
property~\eqref{eq:vertauschung} 
implies that $\Phi$ is symplectic. The arguments here are purely algebraic. 
We put these three aspects together to derive the final form of
$\rep$.

We would like to offer our  thanks and appreciation  to Christian Bargetz whose
advice helped us to compress our original proof of Lemma~\ref{cor:rep_kernel} to a few lines.

\section{Symplectic matrices and metaplectic operators}\label{sec:prelim}
The symmetric time-frequency shifts 
$$
\rho(\lambda) =\rho(x,\omega) = T_{x/2} M_\omega T_{x/2},\quad \lambda = (x,\omega)\in\Rdd,
$$ 
are a projective unitary representation of $\rdd$.
They satisfy
\begin{equation}\label{eq:rho_of_sum}
    \rho(\lambda_1+\lambda_2) =e^{\pi i \lambda_1\cdot \J \lambda_2} \rho(\lambda_1)\rho(\lambda_2),\qquad \J = \begin{psmallmatrix}
    0 & I_d \\ -I_d & 0
\end{psmallmatrix}.
\end{equation}
In particular, for all $n\in\N$ and $\lambda\in\Rd$ holds 
\begin{equation}\label{eq:rho_of_sum_special}
    \rho(\lambda)^{-1} = \rho(-\lambda),\qquad \text{ and } \quad \rho(n\lambda) = \rho(\lambda)^n 
\end{equation} 
The symplectic matrices are those matrices which preserve the standard symplectic form $[\lambda_1, \lambda_2] =\lambda_1\cdot \J\lambda_2$, i.e.,  
\begin{equation}\label{eq:Spp_def}
\Spp = \left\lbrace \A\in \R^{2d\times 2d}: \A^t \J \A = \J \right\rbrace.
\end{equation}
 This is a distinguished group of matrices in \tf \ analysis
 \cite{Groechenig2001, Folland1989, Gosson2011}. By the Stone-von
 Neumann theorem, for every symplectic matrix $\A\in\R^{2d\times 2d}$,
 there exists a  unitary operator $\A $ that is unique up to scaling
 with $c\in\T$  \cite{Folland1989, Groechenig2001, Gosson2011}
 satisfying 
\begin{equation}\label{eq:Stone_von-Neumann} 
    \rho(\A\lambda) = \hat \A\rho(\lambda)\hat\A^{-1}\, .
\end{equation}
Thus  $\hat\A$ arises as an intertwining operator, and is called the \emph{metaplectic operator} associated to $\A$.  
The metaplectic operators are isomorphisms on the Schwartz space $\Sd$, the space of rapidly decaying functions \cite{Folland1989, Gosson2011,Groechenig2001},
and by duality, on its dual space $\Sdp$.

\section{Analytic consequences} 
In this section, we exploit the weak*-continuity of the bilinear form $\rep$. 
Explicitly this means that for every sequence $(f_n)_{n\in\N}\in\Sd$ converging to $f\in\Sd$ in the topology of $\Sd$ and every $g\in\Sd$, we have
\begin{equation}
   \lim\limits_{n\to\infty} \rep(f_n, g) = \rep(f,g)\quad \text{ in the weak*-topology on }\Sdp,
\end{equation}
and, likewise, for all $(g_n)_{n\in\N}\in\Sd$ converging to $g\in\Sd$ and all  $f\in\Sd$ 
\begin{equation}
    \lim\limits_{n\to \infty}\rep(f, g_n)= \rep(f,g)\quad \text{ in the weak*-topology on }\Sdp.
\end{equation}
Throughout, we denote with $\langle T,f\rangle = T(f)$ the evaluation of the tempered distribution $T\in\Sdp$ at $f\in\Sd$. 
We will use the following bilinear version of  the Schwartz kernel theorem \cite[p.~531]{Treves1967}.
\begin{lemma}\label{cor:rep_kernel} 
    For every separately weak*-continuous bilinear mapping 
    $$
    \rep:\Sd\times \Sd\to\Sddp,
    $$
    there exists a unique weak*-continuous linear operator $\Trep:\Sdd\to \Sddp$ 
    such that for all $f,g\in\Sd$ and all $H\in\Sdd$
    \begin{equation}
        \langle \rep(f,g), H\rangle = \langle\Trep (f\otimes g), H\rangle.
    \end{equation}
    In addition, there exists a kernel $K=K_\rep\in \mathcal{S}'(\Rddd)$ such that 
    \begin{equation} \label{eq:kern1}
        \langle\rep(f,g),H\rangle = 
        \langle K, f\otimes g \otimes H \rangle,\quad f,g\in\Sd, \, H\in\Sdd.
    \end{equation}
\end{lemma}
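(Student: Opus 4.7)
The plan is to reduce the statement to the classical Schwartz kernel theorem on $\mathcal{S}(\R^{4d})$ by introducing the auxiliary trilinear form
\[
B(f,g,H) := \langle \rep(f,g), H\rangle, \qquad f,g\in\Sd,\ H\in\Sdd.
\]
The first step is to verify that $B$ is separately continuous in each of its three arguments. Continuity in $f$ (resp.\ $g$) follows directly from the assumed separate weak*-continuity of $\rep$: if $f_n\to f$ in $\Sd$, then $\rep(f_n,g)\to\rep(f,g)$ in the weak*-topology on $\Sddp$, which by definition means $\langle \rep(f_n,g),H\rangle\to\langle \rep(f,g),H\rangle$ for every $H\in\Sdd$. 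Continuity in $H$ is automatic, because $\rep(f,g)\in\Sddp$ is a tempered distribution.

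Since $\Sd$ and $\Sdd$ are Fréchet spaces, the three-variable Banach--Steinhaus theorem upgrades the separately continuous trilinear form $B$ to a jointly continuous trilinear form on $\Sd\times\Sd\times\Sdd$. The Schwartz kernel theorem (in the form stated on p.~531 of \cite{Treves1967}) then yields a unique $K=K_\rep\in\mathcal{S}'(\R^{4d})$ with
\[
B(f,g,H) \;=\; \langle K, f\otimes g\otimes H\rangle,
\]
which is exactly the identity \eqref{eq:kern1}.

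To construct $\Trep$, define
\[
\langle \Trep(F), H\rangle \;:=\; \langle K, F\otimes H\rangle, \qquad F\in\Sdd,\ H\in\Sdd.
\]
The continuity of $K$ on $\mathcal{S}(\R^{4d})$, combined with the continuity of the tensor product map $\Sdd\times\Sdd\to\mathcal{S}(\R^{4d})$, ensures that $\Trep(F)\in\Sddp$ for each $F\in\Sdd$ and that $F\mapsto \Trep(F)$ is weak*-continuous. By construction $\Trep(f\otimes g) = \rep(f,g)$. Uniqueness of $\Trep$ is then a consequence of the density of $\mathrm{span}\{f\otimes g : f,g\in\Sd\}$ in $\Sdd$: any two weak*-continuous linear operators $\Sdd\to\Sddp$ that agree on this dense subspace must coincide.

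The only genuinely analytic obstacle is the passage from separate continuity to the existence of a kernel. Everything else is bookkeeping and functorial extension. Since $\Sd$ is a Fréchet (in fact, nuclear Fréchet) space, this step is a classical consequence of Banach--Steinhaus; this is presumably where the short proof suggested by C.~Bargetz (and acknowledged in the introduction) enters, collapsing a potentially long hands-on argument into a one-line appeal to the kernel theorem.
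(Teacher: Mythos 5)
Your proof is correct, but it is organized differently from the paper's. The paper works directly with the vector-valued bilinear map $\rep:\Sd\times\Sd\to\Sddp$: it uses the Montel property of $\Sd$ to upgrade the assumed weak*-continuity in each variable to continuity into $\Sddp$ with its strong topology, passes from separate to joint continuity, extends through the projective tensor product $\Sd\otimes_\pi\Sd\cong\Sdd$ to obtain $\Trep$ first, and only then invokes the kernel theorem to produce $K$. You scalarize at the outset, work with the trilinear form $B(f,g,H)=\langle\rep(f,g),H\rangle$, obtain $K$ first, and read $\Trep$ off from $K$. Your order of construction buys a genuine simplification: weak*-continuity of $\rep(\cdot,g)$ is, by definition, exactly separate (sequential, hence, by metrizability of $\Sd$, genuine) continuity of the scalar form $B$, so the Montel-space step disappears and no topology on $\Sddp$ other than the weak* one is ever needed; the construction of $\Trep$ from $K$ and the uniqueness via density of $\mathrm{span}\{f\otimes g\}$ are both sound. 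Two details deserve a word. First, the theorem on p.~531 of Tr\`eves is the bilinear kernel theorem; for the trilinear form you should either quote the multilinear version (equivalently, the identification $\Sd\otimes_\pi\Sd\otimes_\pi\Sdd\cong\mathcal{S}(\Rddd)$ together with the universal property of the completed projective tensor product) or apply the bilinear theorem twice, the second application requiring a short density/equicontinuity argument that $H\mapsto\langle K_H,F\rangle$ is continuous for every $F\in\Sdd$ and not only for elementary tensors. Second, separate-to-joint continuity for \emph{trilinear} forms on Fr\'echet spaces is a standard inductive consequence of the bilinear Banach--Steinhaus statement rather than a direct citation of it; in fact this step can be omitted altogether, since the kernel theorem for $\mathcal{S}$ already applies to separately continuous forms.
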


\begin{proof}
As we are unaware of an explicit reference, we offer a proof sketch
for completeness. The proof uses the same arguments as the numerous
kernel theorems in functional analysis. 
We recall that $\Sd $ is a
nuclear Fr\'echet space and hence a Montel space, i.e., bounded,
closed sets in $\Sd $ are compact and vice versa, see~\cite[Prop.~34.4]{Treves1967}. This
property implies that a weak$^*$-continuous map from $\Sd $ to $\Sd '$
is automatically continuous, when $\Sd ' $ is endowed with the strong
topology (topology of uniform convergence on bounded
sets)~\cite[p.~358]{Treves1967}. Consequently, for fixed $g\in \Sd$ the map $f \in \Sd \mapsto
\Trep (f, g)\in \Sddp$ is strongly continuous, and likewise $g \in \Sd \mapsto
\Trep (f, g)\in \Sddp$  is strongly continuous for every $f \in \Sd
$. Since $\Sd $ is a Fr\'echet space, the separate continuity implies
the joint continuity of $\Trep $ on the Cartesian product $\Sd \times
\Sd $~\cite[Thm.~2.17]{rudin1986}.

Now we extend $\Trep $ to a linear map on the algebraic tensor product
$\Sd \times \Sd $. We endow $\Sd \times \Sd $ with the projective
tensor product topology. This is by definition the finest topology
such that the map $(f,g) \mapsto f\otimes g$ is continuous. 
As a consequence, $\Trep $ is (strongly) continuous on $\Sd \times \Sd
$. The completion of $\Sd \times \Sd$ with respect to this topology is
the projective tensor product $\Sd \otimes _\pi \, \Sd $ and is
canonically isomorphic with $\mathcal{S}(\rdd )$~\cite[Thm.~51.6.]{Treves1967}. By continuity,
$\Trep $  can now  be extended to a (strongly)  continuous map from $\Sd \otimes
_\pi \, \Sd \cong \Sdd $ to $\Sddp $. Now the Schwarz kernel theorem
provides a kernel proving the representation \eqref{eq:kern1}. 
\end{proof}

We now relate the bilinear kernel theorem to the setup in Theorem \ref{thm:main}.
We show that the linear extension $\Trep$ of $\rep$ also satisfies a covariance property.  
\begin{lemma}\label{lem:Trep_extension}
    Let $\rep:\Sd\times \Sd\to \Sddp$ be a non-zero, separately weak*-continuous bilinear form satisfying the covariance property \eqref{eq:vertauschung}. 
    Then there exists a non-zero weak*-continuous operator $\Trep : \Sdd\to\Sddp$ and  
    a function $\Phi:\Rddd\to\Rddd$ satisfying
    \begin{equation}\label{eq:Tdef}
    \Trep(f\otimes g) = \rep(f,g),\quad f,g\in\Sd
\end{equation}
and the covariance property
    \begin{equation}\label{eq:vertausch_Trep_extend}
    \Trep \rho(\lambda)F = c \rho(\Phi(\lambda)) \Trep F, \quad \lambda\in\Rddd,\, F\in\Sdd.
\end{equation}
The function $\Phi$ is measurable if and only if $\Phi_\rep$ is measurable. 
\end{lemma}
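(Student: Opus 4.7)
The plan is to invoke Lemma~\ref{cor:rep_kernel} to produce the linear extension $\Trep:\Sdd\to\Sddp$ satisfying $\Trep(f\otimes g)=\rep(f,g)$, and then transport the bilinear covariance of $\rep$ to the linear covariance \eqref{eq:vertausch_Trep_extend} of $\Trep$ by a coordinate bookkeeping on $\Rddd$.

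The key observation is that tensor products intertwine the $d$-dimensional and the $2d$-dimensional time-frequency shifts, up to a permutation of coordinates. Writing $\lambda=(\lambda_1,\lambda_2)$ and $\gamma=(\gamma_1,\gamma_2)$ in $\Rdd$, a direct comparison of $(\rho(\lambda)f)(x)\,(\rho(\gamma)g)(y)$ with the defining formula for the $2d$-dimensional $\rho$ acting on $\Sdd$ yields
\begin{equation}
(\rho(\lambda)f)\otimes(\rho(\gamma)g)=\rho(P(\lambda,\gamma))\,(f\otimes g),
\end{equation}
where $P:\Rddd\to\Rddd$ is the linear permutation $P(\lambda_1,\lambda_2,\gamma_1,\gamma_2)=(\lambda_1,\gamma_1,\lambda_2,\gamma_2)$.

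With this identity at hand, I set $\Phi(\mu):=\Phi_\rep(P^{-1}\mu)$ for $\mu\in\Rddd$. For an elementary tensor $F=f\otimes g$, substituting the displayed identity into the covariance \eqref{eq:vertauschung} for $\rep$ and using $\Trep(f\otimes g)=\rep(f,g)$ gives \eqref{eq:vertausch_Trep_extend} at once. Linearity of both sides in $F$ then extends the identity to the algebraic tensor product $\Sd\otimes\Sd$, and weak*-continuity of $\Trep$ together with the continuity of $\rho(\mu)$ on $\Sdd$ propagates it to all of $\Sdd$, because $\Sd\otimes\Sd$ is dense in $\Sdd=\Sd\otimes_\pi\Sd$ (the very fact already used in the proof of Lemma~\ref{cor:rep_kernel}).

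Non-vanishing of $\Trep$ is automatic: if $\rep\neq 0$, pick $f,g\in\Sd$ with $\rep(f,g)\neq 0$ and note that $\Trep(f\otimes g)=\rep(f,g)\neq 0$. The measurability equivalence is immediate because $P$ is a linear bijection on $\Rddd$, so $\Phi=\Phi_\rep\circ P^{-1}$ is measurable iff $\Phi_\rep$ is. I do not foresee any substantive obstacle---the entire content of the lemma is the identification of the correct permutation $P$, and everything else is a standard density-and-continuity argument.
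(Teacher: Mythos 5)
Your argument is correct and is essentially the paper's proof: the same appeal to Lemma~\ref{cor:rep_kernel}, the same tensor-intertwining identity (your permutation $P$ is exactly the coordinate reshuffling in \eqref{eq:rho_tensor}, and $\Phi=\Phi_\rep\circ P^{-1}$ agrees with \eqref{eq:defPhi} since $P$ is an involution), and the same extension from elementary tensors by linearity, density of $\Sd\otimes\Sd$ in $\Sdd$, and weak*-continuity of $\Trep$ and of the time-frequency shifts.
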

\begin{proof} 
Since $\rep $ is separately weak$^*$-continuous,  Lemma
\ref{cor:rep_kernel} asserts the existence of the associated linear operator
$\Trep $.
We claim that 
\begin{equation}\label{eq:defPhi}
    \Phi(\lambda)= \Phi(\lambda_1, \lambda_2, \lambda_3, \lambda_4) = \Phi_\rep(\lambda_1,  \lambda_3,\lambda_2, \lambda_4),\quad \lambda_1, \lambda_2, \lambda_3, \lambda_4\in\Rd.
\end{equation}
has the desired property.
The modification of $\Phi $ is a
    consequence of the reordering of \tfs s  for $\rho $ on $\rdd
    \times \rdd $ to $\R ^{4d}$ as follows: 
\begin{equation}\label{eq:rho_tensor}
    \rho(\lambda_1, \lambda_2, \lambda_3, \lambda_4) (f\otimes g) 
    = \rho(\lambda_1,\lambda_3)f\otimes \rho(\lambda_2,\lambda_4) g, 
\quad\lambda_1,\lambda_2,\lambda_3, \lambda_4\in\Rd.
\end{equation}
This implies that \eqref{eq:vertausch_Trep_extend}
is satisfied by the elementary tensors, and by linearity, on their linear span.
Since \tf\ shifts are weak*-continuous on $\Sddp$ \cite[Cor.~11.2.22]{Groechenig2001}, the identity $\Trep\rho(\lambda) = \rho(\Phi(\lambda))\Trep$ 
extends from $\Sd\otimes \Sd$ to $\Sdd$ by continuity and density. 
\end{proof}

\section{ Consequences of the General Covariance Property} 
In this section, we focus on the implications of the
covariance property \eqref{eq:vertauschung}, and by extension, of the covariance property \eqref{eq:vertausch_Trep_extend},
and show that  $\Phi$ is necessarily a symplectic linear
transformation.  For the proof we will use some basic, but deep facts
about homomorphisms between Lie groups. 

We split the proof into several lemmata.

\begin{lemma}\label{lem:injective} 
Let $\Trep:\Sdd\to\Sddp$ be a non-zero continuous linear operator satisfying the general covariance property \eqref{eq:vertausch_Trep_extend}. Then $\Trep$ is one-to-one.
\end{lemma}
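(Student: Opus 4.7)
The plan is to show that $\ker\Trep$ is a closed, $\rho$-invariant subspace of $\Sdd$, and then to rule out a non-trivial such subspace using Moyal's inversion formula together with the Schwartz kernel representation from Lemma~\ref{cor:rep_kernel}.

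First, $\ker\Trep\subset\Sdd$ is closed because $\Trep$ is continuous, and the covariance relation~\eqref{eq:vertausch_Trep_extend} makes it invariant under every time-frequency shift: if $\Trep F=0$, then
\begin{equation*}
\Trep\rho(\lambda)F = c\,\rho(\Phi(\lambda))\Trep F = 0,\qquad \lambda\in\Rddd,
\end{equation*}
so $\rho(\lambda)F\in\ker\Trep$.

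Next I argue by contradiction and assume there is a non-zero $F_0\in\ker\Trep$, normalized so that $\|F_0\|_2=1$. For any $G\in\Sdd$, the short-time Fourier transform $V_{F_0}G(\lambda)=\langle G,\rho(\lambda)F_0\rangle_{L^2}$ belongs to $\mathcal{S}(\Rddd)$, and Moyal's inversion formula gives
\begin{equation*}
G = \int_{\Rddd}V_{F_0}G(\lambda)\,\rho(\lambda)F_0\,d\lambda.
\end{equation*}
The integrand is a continuous $\Sdd$-valued function of $\lambda$; each Schwartz seminorm of $\rho(\lambda)F_0$ grows at most polynomially in $\lambda$, while $V_{F_0}G$ decays rapidly, so the integral converges absolutely as a Bochner integral in $\Sdd$. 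Tensoring with an arbitrary $H\in\Sdd$ produces the corresponding identity for $G\otimes H$ inside $\mathcal{S}(\R^{4d})$.

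Finally I pair with the kernel $K=K_\rep\in\mathcal{S}'(\R^{4d})$ provided by Lemma~\ref{cor:rep_kernel}. Since $K$ is continuous on $\mathcal{S}(\R^{4d})$,
\begin{equation*}
\langle\Trep G,H\rangle = \langle K,G\otimes H\rangle = \int_{\Rddd}V_{F_0}G(\lambda)\,\langle K,\rho(\lambda)F_0\otimes H\rangle\,d\lambda = 0,
\end{equation*}
because $\langle K,\rho(\lambda)F_0\otimes H\rangle = \langle\Trep\rho(\lambda)F_0,H\rangle = 0$ for every $\lambda$ by the invariance of $\ker\Trep$. Thus $\Trep G=0$ for all $G\in\Sdd$, contradicting the hypothesis that $\Trep$ is non-zero. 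The main obstacle is making the interchange between the distribution $K$ and the vector-valued integral rigorous; this is precisely why I route the argument through the kernel representation rather than trying to pull $\Trep$ under the integral directly.
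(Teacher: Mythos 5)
Your proof is correct and follows essentially the same route as the paper's: both derive a contradiction from a nonzero $F_0\in\ker\Trep$ by using the covariance property to show that $\Trep$ annihilates every time-frequency shift $\rho(\lambda)F_0$, and then that these shifts generate all of $\Sdd$. The paper simply invokes the density of finite linear combinations of the $\rho(\lambda)F_0$ together with the continuity of $\Trep$, whereas you substantiate that density step via the STFT inversion formula as a vector-valued integral paired against the Schwartz kernel --- a more explicit but equivalent justification.
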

\begin{proof}
If $\Trep F_0=0$ for a non-zero $F_0\in\Sdd$, then for all linear combinations $F = \sum_{j=1}^k c_j \rho(\lambda_j) F_0$ we obtain
\begin{equation}
    \Trep F = \sum_{j=1}^k c_j \Trep \rho(\lambda_j) F_0
= \sum_{j=1}^k c_j \rho(\Phi(\lambda_j)) \Trep F_0=0.
\end{equation}
Therefore, $\Trep $ vanishes on the dense subspace of finite linear combinations. By continuity, $\Trep =0$, contradicting the assumption.
\end{proof}
Lemma \ref{lem:injective} implies the no-degeneracy stated in Corollary \ref{cor:from_main} (c).

Recall that we denote the skew-symmetric form with $[\lambda,\mu] = \lambda\cdot \J\mu$, $\lambda,\mu\in\Rdd$.

\begin{lemma}\label{lem:Phi_of_Sum}
Let $\Trep:\Sdd\to\Sddp$ be a non-zero continuous linear operator satisfying the general covariance property \eqref{eq:vertausch_Trep_extend}. 
Then
\begin{equation}\label{eq:Phi_of_Sum}
    \rho(\Phi(\lambda+\mu))\Trep F = e^{\pi i ([\lambda,\mu] - [\Phi(\lambda),\Phi(\mu)])}\rho(\Phi(\lambda)+\Phi(\mu))\Trep F,\quad F\in\Sdd, \ \lambda,\mu\in\Rddd,
\end{equation}
and 
\begin{equation}\label{eq:Z_preserved}
[\lambda,\mu] - [\Phi(\lambda),\Phi(\mu)]\in \Z,\quad \lambda,\mu\in\Rddd.
\end{equation}
\end{lemma}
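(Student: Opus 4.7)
The plan is to compute $\Trep\rho(\lambda+\mu)F$ in two different ways using the covariance identity \eqref{eq:vertausch_Trep_extend}, equate them, and then exploit the symmetry $\lambda\leftrightarrow\mu$ to force the integrality in \eqref{eq:Z_preserved}.

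On the one hand, a direct application of \eqref{eq:vertausch_Trep_extend} gives $\Trep\rho(\lambda+\mu)F = c\,\rho(\Phi(\lambda+\mu))\Trep F$. On the other hand, using the cocycle relation \eqref{eq:rho_of_sum} to write $\rho(\lambda+\mu) = e^{\pi i[\lambda,\mu]}\rho(\lambda)\rho(\mu)$ and applying \eqref{eq:vertausch_Trep_extend} twice yields
\[\Trep\rho(\lambda+\mu)F = e^{\pi i [\lambda,\mu]}\, c^2\, \rho(\Phi(\lambda))\rho(\Phi(\mu))\Trep F.\]
Refactoring the last expression via $\rho(\Phi(\lambda))\rho(\Phi(\mu)) = e^{-\pi i [\Phi(\lambda),\Phi(\mu)]}\rho(\Phi(\lambda)+\Phi(\mu))$ and equating the two computations, the scalar prefactors absorb and one obtains \eqref{eq:Phi_of_Sum}.

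For \eqref{eq:Z_preserved}, I would swap $\lambda$ and $\mu$ in the identity \eqref{eq:Phi_of_Sum} just derived. The operator $\rho(\Phi(\lambda+\mu))\Trep F$ on the left and $\rho(\Phi(\lambda)+\Phi(\mu))\Trep F$ on the right are invariant under this swap, whereas the antisymmetry of the symplectic form $[\cdot,\cdot]$ replaces the exponent by its negative. By Lemma~\ref{lem:injective} one can choose $F\neq 0$ with $\Trep F\neq 0$, and since $\rho$ is unitary, $\rho(\Phi(\lambda)+\Phi(\mu))\Trep F\neq 0$. Comparing the scalar coefficients, the two phases must coincide, hence $e^{2\pi i([\lambda,\mu]-[\Phi(\lambda),\Phi(\mu)])} = 1$, which is \eqref{eq:Z_preserved}.

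The main subtlety lies in the bookkeeping of the projective phase $c$ in the covariance property: since $\rho$ is only a projective unitary representation, applying \eqref{eq:vertausch_Trep_extend} twice produces $c^2$ whereas a single application yields $c$, and one must check that the residual scalar is absorbed consistently in the final identity. In effect, \eqref{eq:Phi_of_Sum} is the precise statement that the $2$-cocycle obstructing $\Phi$ from commuting with addition equals the symplectic discrepancy $[\lambda,\mu]-[\Phi(\lambda),\Phi(\mu)]$, and the integrality in \eqref{eq:Z_preserved} is the key algebraic input for the later step of forcing $\Phi$ to be a linear symplectic transformation.
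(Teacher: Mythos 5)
Your proof is correct and follows essentially the same route as the paper: compute $\Trep\rho(\lambda+\mu)F$ twice via the cocycle relation \eqref{eq:rho_of_sum}, refactor, and use the antisymmetry of $[\cdot,\cdot]$ under swapping $\lambda\leftrightarrow\mu$ together with the nonvanishing of $\rho(\Phi(\lambda)+\Phi(\mu))\Trep F$ to force the phase to be trivial. The residual factor of $c$ that your bookkeeping leaves in \eqref{eq:Phi_of_Sum} also appears in the paper's own chain of equalities, so this is consistent with their argument.
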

\begin{proof}
    Let $F\in\Sdd$ be a non-zero function, $\lambda,\mu\in\Rddd$. Then
    \begin{equation}
\begin{split}
    \rho(\Phi(\lambda+\mu))\Trep F 
    & = c^{-1}\,\Trep \rho(\lambda+\mu) F  
    = c^{-1}\, e^{\pi i [\lambda,\mu]}\Trep \rho(\lambda)\rho(\mu) F \\
    & = c\, e^{\pi i [\lambda,\mu]} \rho(\Phi(\lambda))\rho(\Phi(\mu)) \Trep F \\
    & = c\, e^{\pi i ([\lambda,\mu] - [\Phi(\lambda),\Phi(\mu)])} \rho(\Phi(\lambda)+\Phi(\mu)) \Trep F \\
    & = c\, e^{ -\pi i ([\lambda,\mu] - [\Phi(\lambda),\Phi(\mu)])} \rho(\Phi(\lambda)+\Phi(\mu)) \Trep F.
\end{split}
    \end{equation}
    The last equality here is obtained by switching the places of $\lambda$ and $\mu$ (recall that $[\lambda,\mu] = -[\mu,\lambda]$).
    By Lemma \ref{lem:injective}, the chain of equalities can only hold if the phase factors are equal, i.e., 
    \begin{equation}
        e^{2\pi i ([\lambda,\mu] - [\Phi(\lambda),\Phi(\mu)])} =1.
    \end{equation}
    This is equivalent to \eqref{eq:Z_preserved}.
\end{proof}
To show that $\Phi$ is linear, we introduce the subgroup 
\begin{equation}\label{eq:def_H}
    H = \lbrace
    \nu\in\Rddd : \ \exists c_\nu\in\C \text{ with }\rho(\nu)\Trep F = c_\nu\Trep F\text{ for all }F\in\Sdd
    \rbrace,
\end{equation}
This is the \emph{projective kernel} of the representation $\rho$ restricted to the subspace $\mathrm{ran}\,\Trep =\{ \Trep F\in\Sdd : F\in\Sdd\}$, and by continuity of $\Trep$ and $\rho(\lambda)$, $\lambda\in\Rddd$, 
to its closure $\overline{\mathrm{ran}\,\Trep}$ in $\Sdd $.
Let
\begin{equation}
    q_H:\Rddd\to \Rddd/H
\end{equation}
denote the quotient map.
\begin{lemma}\label{lem:H_properties}
Let $\Trep:\Sdd\to\Sddp$ be a non-zero continuous linear operator satisfying the general covariance property \eqref{eq:vertausch_Trep_extend}. 
Further, let $H$ be the projective kernel of $\rho$ defined in \eqref{eq:def_H}. 
Then $H$ is a closed subgroup of $\Rddd$.
\end{lemma}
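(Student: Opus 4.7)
The plan is to verify the two requirements separately: first that $H$ is algebraically closed under the group operations on $\R^{4d}$, then that $H$ is topologically closed. A point used throughout is that $\mathrm{ran}\,\Trep \neq \{0\}$, which follows from $\Trep \neq 0$ (alternatively, from Lemma~\ref{lem:injective}); this ensures that the scalar $c_\nu$ attached to each $\nu \in H$ is uniquely determined by $\nu$, since pairing $\rho(\nu)\Trep F_0 = c_\nu\, \Trep F_0$ with any test function on which $\Trep F_0$ does not vanish pins the value of $c_\nu$ down.

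For the group axioms, clearly $0 \in H$ with $c_0 = 1$. If $\nu \in H$, the invertibility of $\rho(\nu)$ combined with $\mathrm{ran}\,\Trep \neq \{0\}$ forces $c_\nu \neq 0$; then \eqref{eq:rho_of_sum_special} gives $\rho(-\nu) = \rho(\nu)^{-1}$, so $-\nu \in H$ with scalar $c_\nu^{-1}$. Closure under addition is immediate from the cocycle identity~\eqref{eq:rho_of_sum}:
\begin{equation}
\rho(\nu_1 + \nu_2)\Trep F
= e^{\pi i [\nu_1,\nu_2]}\rho(\nu_1)\rho(\nu_2)\Trep F
= e^{\pi i [\nu_1,\nu_2]}\, c_{\nu_1} c_{\nu_2}\,\Trep F,
\end{equation}
so $\nu_1 + \nu_2 \in H$ with scalar $e^{\pi i [\nu_1,\nu_2]} c_{\nu_1} c_{\nu_2}$.

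For topological closure, I would take $\nu_n \in H$ with $\nu_n \to \nu$ in $\R^{4d}$, fix $F_0 \in \Sdd$ with $\Trep F_0 \neq 0$, and choose $\varphi \in \Sdd$ with $\langle \Trep F_0, \varphi\rangle \neq 0$. The weak*-continuity of time-frequency shifts on $\Sddp$ \cite[Cor.~11.2.22]{Groechenig2001} gives $\rho(\nu_n)\Trep F_0 \to \rho(\nu)\Trep F_0$ in $\Sddp$; pairing with $\varphi$ forces $c_{\nu_n} \to c$ for some $c \in \C$. For any $F \in \Sdd$ the same continuity yields both $\rho(\nu_n)\Trep F \to \rho(\nu)\Trep F$ and $\rho(\nu_n)\Trep F = c_{\nu_n}\Trep F \to c\,\Trep F$ in $\Sddp$, so $\rho(\nu)\Trep F = c\,\Trep F$ and $\nu \in H$.

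I do not anticipate any serious obstacle. The argument is essentially the standard one showing that the stabilizer of a non-zero vector under a strongly continuous group representation is a closed subgroup; the only wrinkle is the projective cocycle in~\eqref{eq:rho_of_sum}, and this is handled purely algebraically in the addition step. The existence of a non-zero $\Trep F_0$, needed to pin down $c_\nu$ and to propagate limits, is exactly what Lemma~\ref{lem:injective} supplies.
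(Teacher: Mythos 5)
Your proof is correct and follows essentially the same route as the paper's: the group structure of $H$ comes from the cocycle identity \eqref{eq:rho_of_sum}, and closedness comes from the weak*-continuity of $\lambda\mapsto\rho(\lambda)\Trep F$ together with pairing against a test function on which some $\Trep F_0$ does not vanish to extract convergence of the scalars $c_{\nu_n}$. The only difference is that you spell out the identity and inverse axioms explicitly, which the paper leaves implicit.
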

\begin{proof}
    Since $\rho$ is a projective representation of $\Rddd$, it follows that $H$ is a group:
    \begin{equation}\label{eq:H_is_group}
        \rho(\nu+\eta)\Trep F = e^{\pi i [\nu,\eta]} c_\nu c_\eta \Trep F,\quad F\in\Sdd,\, \nu,\eta\in H.
    \end{equation}
    The continuity of the mapping
$        \lambda \mapsto \rho(\lambda)\Trep F$
    implies that $H$ is closed and the mapping $H\to\C,\ \nu\mapsto c_\nu,$
    is continuous. 
    To see this, let $(\nu_n)_{n\in\N}\subseteq H$ be a convergent sequence with limit $\mu\in\Rddd$. Then for all $F,G\in\Sdd$ 
    \begin{equation}
          \langle \rho(\nu) \Trep F,G\rangle
        = \langle \lim\limits_{n\to\infty}\rho(\nu_n) \Trep F,G\rangle
        = \lim\limits_{n\to\infty}  c_{\nu_n} \langle\Trep F,G\rangle.
    \end{equation}
    Since the limit in the middle converges, so does the limit on the right-hand side. Since $\Trep$ is not zero, there exist $F,G$ with $\langle \Trep F,G\rangle\neq 0$. Thus, 
    $ c_\nu = \lim\limits_{n\to\infty} c_{\nu_n}$
    is
    well-defined and satisfies $\rho(\nu)\Trep F = c_\nu\Trep F$, $F\in\Sdd$.
\end{proof}
\begin{lemma}\label{lem:Phi_almost_uniq}
Let $\Trep:\Sdd\to\Sddp$ be a non-zero continuous linear operator satisfying the general covariance property \eqref{eq:vertausch_Trep_extend}. 
Further, let $H$ be the projective kernel of $\rho$ defined in \eqref{eq:def_H}. 
Then 
the map
\begin{equation}
    q_H\circ \Phi:\Rddd\to \Rddd/H,\quad \lambda\mapsto (q_H\circ \Phi)(\lambda)
\end{equation}
is one-to-one.
\end{lemma}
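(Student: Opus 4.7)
The plan is to assume $\Phi(\lambda) \equiv \Phi(\mu) \pmod{H}$ and deduce $\lambda = \mu$ by combining the injectivity of $\Trep$ (Lemma~\ref{lem:injective}) with the projective faithfulness of the Schr\"odinger-type representation $\rho$ on $\Sdd$. Set $\nu := \Phi(\lambda) - \Phi(\mu) \in H$, so that by the definition \eqref{eq:def_H} of $H$ one has $\rho(\nu) \Trep F = c_\nu \Trep F$ for every $F \in \Sdd$, with a single scalar $c_\nu$ independent of $F$.

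First I would use the cocycle identity \eqref{eq:rho_of_sum} to rewrite $\rho(\Phi(\lambda)) = \rho(\Phi(\mu) + \nu) = e^{\pi i [\Phi(\mu),\nu]} \rho(\Phi(\mu)) \rho(\nu)$ and apply this to $\Trep F$. Absorbing $\rho(\nu) \Trep F = c_\nu \Trep F$ produces $\rho(\Phi(\lambda)) \Trep F = \alpha\, \rho(\Phi(\mu)) \Trep F$ for a scalar $\alpha \in \C$ that does not depend on $F$. Inserting this into the covariance property \eqref{eq:vertausch_Trep_extend} on both sides yields
\[
\Trep \rho(\lambda) F \;=\; \alpha\, \Trep \rho(\mu) F \;=\; \Trep\bigl(\alpha\, \rho(\mu) F\bigr), \qquad F \in \Sdd.
\]
Lemma~\ref{lem:injective} then forces $\rho(\lambda) = \alpha\, \rho(\mu)$ as operators on $\Sdd$. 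Multiplying by $\rho(-\mu) = \rho(\mu)^{-1}$ and using \eqref{eq:rho_of_sum} together with \eqref{eq:rho_of_sum_special} rewrites this as $\rho(\lambda - \mu) = \beta\, \mathrm{Id}$ on $\Sdd$ for some $\beta \in \C$.

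The remaining, and only non-routine, step is to deduce $\lambda - \mu = 0$ from the identity $\rho(\eta) = \beta\, \mathrm{Id}$; equivalently, to know that the projective kernel of $\rho$ on all of $\Sdd$ is trivial. I would verify this by testing on a centered Gaussian $F(x) = e^{-\pi |x|^2}$ on $\R^{2d}$: writing $\eta = (\xi_1, \xi_2) \in \R^{2d} \times \R^{2d}$, the action $\rho(\eta) F$ is a unimodular multiple of a Gaussian recentered at $\xi_1$ carrying the modulation $e^{2\pi i \xi_2 \cdot x}$. Comparing centers forces $\xi_1 = 0$, and then comparing the pointwise phases of $e^{2\pi i \xi_2 \cdot x} F = \beta F$ forces $\xi_2 = 0$. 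Hence $\eta = 0$, so $\lambda = \mu$, and $q_H \circ \Phi$ is one-to-one.
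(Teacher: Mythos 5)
Your proof is correct and follows essentially the same route as the paper: write $\Phi(\lambda)=\Phi(\mu)+\nu$ with $\nu\in H$, absorb $\rho(\nu)$ into the scalar $c_\nu$, use the covariance property and the injectivity of $\Trep$ from Lemma~\ref{lem:injective} to conclude that $\rho(\lambda-\mu)$ is a multiple of the identity, hence $\lambda=\mu$. The only difference is that you spell out the final step (testing $\rho(\eta)=\beta\,\mathrm{Id}$ on a Gaussian), which the paper simply asserts; that added detail is correct.
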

\begin{proof}
We want to show that for all $\lambda,\mu\in\Rddd$ and $\nu\in H$ 
\begin{equation}
    \Phi(\lambda)=\Phi(\mu)+\nu\quad \implies \lambda = \mu.
\end{equation}
    In a single chain of inequalities, we have for all $F\in\Sdd$
\begin{align}
        \Trep (\rho(\lambda)F)
        & = c\,\rho(\Phi(\lambda))\Trep F = c\,\rho(\Phi(\mu)+\nu)\Trep F \\
        & \overset{\eqref{eq:rho_of_sum}}{=}c\, e^{\pi i [\Phi(\mu),\nu]}\rho(\Phi(\mu)) \rho(\nu) \Trep F \\
        &  = c\, c_\nu\, e^{\pi i [\Phi(\mu),\nu]}\rho(\Phi(\mu)) \Trep F \\
        &  = \Trep (c\, c_\nu\, e^{\pi i [\Phi(\mu),\nu]} \rho(\mu) F).
\end{align}
By Lemma \ref{lem:injective}, 
\begin{equation}
    \rho(\lambda)F = c\,c_\nu\, e^{\pi i [\Phi(\mu),\nu]} \rho(\mu) F,
\end{equation}
or equivalently, 
\begin{equation}\label{abc}
    \rho(\lambda-\mu)F =c\, c_\nu\, e^{\pi i ([\Phi(\mu),\nu] + [\lambda,\mu])} F.
\end{equation}
Since \eqref{abc}  holds  for all $F\in \Sdd$, the \tfs \ $\rho
(\lambda -\mu )$ must be a multiple of the identity, and therefore
$\lambda = \mu $. 
\end{proof} 

We now prove that $\Phi$ is an invertible linear map. 
\begin{proposition}\label{prop:Phi_linear}
    Let $\Trep:\Sdd\to\Sddp$ be a non-zero continuous linear operator satisfying the covariance property \eqref{eq:vertausch_Trep_extend} with a measurable $\Phi$. 
Then $\Phi$ is an invertible linear map, i.e., there exists a matrix $\A\in\mathrm{GL}(4d,\R)$ such that $\Phi(\lambda) = \A\lambda$, $\lambda\in\Rddd$.
\end{proposition}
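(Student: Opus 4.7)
The plan is to first show that $q_H\circ\Phi:\Rddd\to\Rddd/H$ is an additive group homomorphism, then upgrade its regularity from measurability to continuity, and finally use injectivity (Lemma~\ref{lem:Phi_almost_uniq}) together with invariance of domain to force $H=\{0\}$, whereupon $\Phi$ coincides with $q_H\circ\Phi$ and is therefore a continuous, injective additive map on $\Rddd$, hence a linear bijection.

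For additivity, I would unpack Lemma~\ref{lem:Phi_of_Sum}: since $\epsilon(\lambda,\mu) = [\lambda,\mu]-[\Phi(\lambda),\Phi(\mu)]\in\Z$, the factor $e^{\pi i\epsilon(\lambda,\mu)}$ is the scalar $\pm 1$. Setting $\nu=\Phi(\lambda+\mu)-\Phi(\lambda)-\Phi(\mu)$ and using \eqref{eq:rho_of_sum} to rewrite $\rho(\Phi(\lambda+\mu))$ as $e^{\pi i[\nu,\Phi(\lambda)+\Phi(\mu)]}\rho(\nu)\rho(\Phi(\lambda)+\Phi(\mu))$, one can cancel the invertible operator $\rho(\Phi(\lambda)+\Phi(\mu))$ in the identity of Lemma~\ref{lem:Phi_of_Sum}, after a commutation, to reach $\rho(\nu)\Trep F = c_\nu \Trep F$ for every $F\in\Sdd$. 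By \eqref{eq:def_H} this gives $\nu\in H$, so $q_H\circ\Phi$ is a homomorphism; it is measurable because $\Phi$ is.

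Next, since $H$ is closed (Lemma~\ref{lem:H_properties}), both $\Rddd$ and $\Rddd/H$ are locally compact Polish abelian groups, and the classical Banach--Weil automatic-continuity theorem asserts that the measurable homomorphism $q_H\circ\Phi$ is automatically continuous. Because it is also injective, invariance of domain forces $\dim(\Rddd/H)\geq 4d$, so $H$ must be discrete and $q_H$ is a covering map. Since $\Rddd$ is simply connected, the continuous homomorphism $q_H\circ\Phi$ lifts to a continuous homomorphism $\tilde\Phi:\Rddd\to\Rddd$, and continuous additive maps between Euclidean spaces are $\R$-linear. The injectivity of $q_H\circ\Phi$ translates to $\tilde\Phi^{-1}(H)=\{0\}$, hence $\ker\tilde\Phi=\{0\}$, so $\tilde\Phi\in\mathrm{GL}(4d,\R)$; consequently $H=\tilde\Phi(\tilde\Phi^{-1}(H))=\{0\}$. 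With $H=\{0\}$ we have $q_H=\mathrm{id}$ and $\Phi=\tilde\Phi\in\mathrm{GL}(4d,\R)$, as claimed.

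The main obstacle is the transition from measurability to continuity; without a regularity hypothesis on $\Phi$ this is the only step that requires a non-elementary input. Everything else is either algebraic manipulation of $\rho$ or standard Lie-theoretic structure theory (lifting through coverings, the topological-dimension consequence of invariance of domain, and the $\R$-linearity of continuous additive maps on Euclidean space).
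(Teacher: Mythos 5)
Your proof is correct, and its skeleton coincides with the paper's: form the projective kernel $H$, deduce from Lemma~\ref{lem:Phi_of_Sum} that $q_H\circ\Phi$ is an additive (and, by Lemma~\ref{lem:Phi_almost_uniq}, injective) homomorphism, and invoke automatic continuity of measurable homomorphisms between locally compact groups (your Banach--Weil citation plays the role of the paper's Hewitt--Ross~22.18). Where you genuinely diverge is the endgame. The paper upgrades continuity to real analyticity (Chevalley), passes to the differential $d\pi$, counts dimensions to get $\mathfrak{h}=\{0\}$, shows $\pi$ is a Lie group isomorphism onto the connected group $G\cong\R^{4d-l}\times\T^l$, and kills the torus factor by simple connectivity of $\Rddd$. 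You instead rule out $\dim(\Rddd/H)<4d$ by invariance of domain, lift the continuous injective homomorphism through the covering $q_H$ to a continuous additive, hence linear, map $\tilde\Phi$, and conclude $H=\tilde\Phi(\tilde\Phi^{-1}(H))=\{0\}$ by pure linear algebra. This buys a more elementary finish: no analyticity theorem, no Lie functor, no classification of connected abelian Lie groups --- only covering-space lifting and the standard fact that continuous additive maps on $\R^n$ are linear. The one step worth writing out carefully is the derivation of $\nu=\Phi(\lambda+\mu)-\Phi(\lambda)-\Phi(\mu)\in H$ with a constant $c_\nu$ independent of $F$: after commuting $\rho(\nu)$ past $\rho(\Phi(\lambda)+\Phi(\mu))$ the resulting phase $e^{\pi i(\epsilon(\lambda,\mu)+[\nu,\Phi(\lambda)+\Phi(\mu)])}$ depends only on $\lambda,\mu$, so the membership criterion \eqref{eq:def_H} is indeed met; your sketch gets this right.
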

\begin{proof}
Let $H$ be the projective kernel of $\rho$ defined in \eqref{eq:def_H}, and $q_H$ the associated quotient map
to $G = \Rddd/H$.
Since we are in the abelian setting and $H$ is closed by Lemma \ref{lem:H_properties}, $H$ is a Lie group with Lie algebra $\mathfrak{h}$ \cite[p.~51.]{Knapp_LieBeyond}. 
Furthermore, 
$G$ is a connected abelian Lie group with Lie algebra $\mathfrak{g}=\Rddd/\mathfrak{h}\cong \R^k$ for some $k\leq 4d$.

\emph{An injective Lie group homomorphism.} 
By Lemma \ref{lem:Phi_of_Sum}, for all $\lambda,\mu\in\Rddd$
    \begin{equation}
        \Phi(\lambda+\mu) -\Phi(\lambda)-\Phi(\mu)\in H.
    \end{equation}
    Thus,
    \begin{equation}
        (q_H\circ \Phi)(\lambda+\mu) = (q_H\circ \Phi)(\lambda)+(q_H\circ \Phi)(\mu),
    \end{equation}
    that is, $\pi = q_H\circ \Phi$ is a group homomorphism. By Lemma \ref{lem:Phi_almost_uniq}, it is one-to-one.

    Since $q_H$ is continuous and $\Phi$ is measurable, $\pi$ is
    measurable. By \cite[Thm.~22.18]{HewittRoss1979}, this implies
    that $\pi$ is continuous. The continuity of $\pi $ implies that it
    is even 
    real analytic \cite[Chap.~4 \textsection~XIII,~Prop.~1]{Chevalley1946}, 
    see also \cite[Chap.~4 \textsection~I]{Chevalley1946}.
    In particular, 
    $\pi $ is a Lie group homomorphism from $\Rddd$ to $G = \Rddd/H$. It induces a \emph{Lie algebra homomorphism} between the corresponding Lie algebras $\Rddd$ and $\mathfrak{g} =\R^{k}$. 

\emph{Dimension count.} 
Since $\pi$ is one-to-one, so is $d\pi$ \cite[Cor.~2.7.4.]{Varadarajan1984}. Thus, $4d\leq k$, i.e., $k=4d$. 
This implies that $d\pi$ is actually invertible and $\mathfrak{h}=\{0\}$.
The latter implies that $H$ is \emph{discrete}, i.e., $H\cong \Z^l$ for some $0\leq l\leq 4d$.
    
    \emph{Inverting $\pi$.}
    Since $d\pi$ is onto, the image $\pi(\Rddd)$ is an open connected subgroup of $G$ \cite[Cor.~2.7.4.]{Varadarajan1984}.
    However, $G$ is connected, so the only open connected subgroup of $G$ is $G$ itself. 
    Thus $\pi$ is invertible, with a real analytic inverse. Therefore, $\pi$ is a Lie group isomorphism.
    This implies that the simply connected Lie group $\Rddd$ is topologically isomorphic to $G\cong \R^{4d-l}\times \T^l$. 
    But the torus is not simply connected, hence $l=0$, i.e., $H=\{0\}$.
    This also implies that $\pi =\Phi$ is an automorphism on $\Rddd$, i.e., there exists an invertible matrix 
    $\A\in\mathrm{GL}(4d,\R)$ such that $\Phi(\lambda) = \A \lambda$, $\lambda\in\Rddd$.
This concludes the proof.
\end{proof}

It remains to prove the main result.

\begin{proof}[Proof of Theorem \ref{thm:main}]
    By Lemma \ref{lem:Trep_extension}, there exists a weak*-continuous operator $\Trep:\Sdd\to\Sddp$ and a measurable function $\Phi:\Rddd\to\Rddd$ such that for all $\lambda\in\Rddd$ and all $F\in\Sdd$ holds
\begin{equation}
    \Trep \rho(\lambda)F =  c \,  \rho(\Phi(\lambda)) \Trep F.
\end{equation}
By Proposition \ref{prop:Phi_linear}, there exists an invertible matrix $\A\in\mathrm{GL}(4d,\R)$ such that $\Phi(\lambda) = \A \lambda$, $\lambda\in\Rddd$.
By Lemma \ref{eq:Phi_of_Sum}, $\A$ satisfies
\begin{equation}
    [\lambda,\mu]-[\A\lambda,\A\mu]\in\Z,\qquad \lambda,\mu\in\Rddd.
\end{equation}
Since the expression is continuous in $\lambda$ and $\mu$, this implies that 
\begin{equation}
    [\lambda,\mu]-[\A\lambda,\A\mu]=0,\qquad \lambda,\mu\in\Rddd.
\end{equation}
i.e., $\A$ is symplectic.

Define $\Trep _0: \Sdd \to \Sddp$ as 
$$
\Trep_0\coloneqq \hat\A^{-1}\Trep \, .
$$
Then by  \eqref{eq:Stone_von-Neumann}, $\Trep _0$  satisfies the
identity 
\begin{equation}
    \Trep_0 \rho(\lambda)F = \hat\A^{-1}\Trep \rho(\lambda)  F =  \hat\A^{-1}\rho(\A\lambda) \Trep F =   \rho(\lambda) \hat\A^{-1} \Trep F =   \rho(\lambda) \Trep_0 F,
\end{equation}
that is, $\Trep _0$  commutes with all \tf\ shifts. 
This fact implies that $\Trep_0$ is a multiple of the identity
operator on $\Sddp$. 

If $\Trep _0$ were bounded on $\lrd $, then this conclusion would follow  from
Schur's lemma.  On the level of distributions, one may  argue as follows.
Since $\Trep_0$ commutes in particular with all time shifts $T_x=\rho(x,0)$, it is a convolution operator \cite[p.~169]{GelfandVilenkin1964}, i.e., there exists a $K_0\in\Sddp$ such that
\begin{equation}
    \Trep_0 F = K_0*F,\quad F\in\Sdd
\end{equation}
holds in the distributional sense. 
Since $\Trep_0$ also commutes with modulations, 
\begin{equation} 
   K_0 *F = M_{-\omega} M_\omega \Trep_0 F 
   = M_{-\omega} \Trep_0 M_\omega F  
   = M_{-\omega} (K_0 * M_\omega F).
\end{equation}
Since $\langle M_\omega K_0,G\rangle  = \langle K_0, M_\omega G\rangle $ and 
$\langle K_0*F,G\rangle = \langle K_0, \tilde F*G\rangle$, where $\tilde F(t) = F(-t)$ denotes the reflection of $F$, 
the last equation can be rewritten as 
\begin{align*}
    \langle K_0, \tilde F* G\rangle  
    & = \langle M_{\omega} (K_0 * M_{-\omega} F), G\rangle  
     = \langle K_0 * M_{-\omega} F, M_{\omega}G \rangle \\
    & = \langle K_0 ,  (M_{-\omega} F)\, \tilde{}* (M_{\omega}G) \rangle
     = \langle K_0 , (M_{\omega} \tilde F)* (M_{\omega}G) \rangle \\
    & = \langle K_0 , M_{\omega} (\tilde F* G) \rangle 
     = \langle M_{\omega}  K_0 , \tilde F* G \rangle.    
    \end{align*}
    Now,  $\Sdd*\Sdd$ spans a dense subspace of $\Sdd$, because it
    contains all Gaussian functions. Therefore it follows that $K_0 = M_\omega K_0$ for all $\omega\in\Rdd$. 
    This implies that $K_0$ is supported on $\{ 0\}$. The only distribution supported on $\{0\}$ which commutes with all frequency shifts is a multiple of the identity, i.e., there exists an $a\in\C$ such that
\begin{equation}
    \hat\A^{-1} \Trep F = \Trep_0 F = aF,\qquad F\in\Sdd.
\end{equation}
Equivalently, $\Trep = a\hat\A$ on $\Sd$. This concludes the proof of
the main theorem.
\end{proof}

It seems that a homomorphism from a locally compact group $G$ to a
quotient $G/H$ with respect to a closed normal  subgroup $H$ can be
one-to-one only if $H=\{e\}$, but currently we do not have an argument
for this more general result. 

\section{Final Remarks}

To appreciate the proof above, we present  some alternative arguments. They do not
require assume the measurability of $\Phi $, but require additional assumptions on the
bilinear form $\rep $.  We note that in  our formulation of
Theorem~\ref{thm:main} these  assumptions appear as corollaries. 

\emph{Eigendistributions.}  The following argument is  based on easy facts about the
eigen-distributions of \tfs s. It is easy to see that an
eigen-distribution  $\rho (\lambda ) F = F$  must be in $F\in \Sddp
\setminus \bigcup _{p\leq \infty } L^p(\rdd )$. Just write $\lambda =
\A (x,0)$ for $\A \in \Spp $ and  $\rho (\lambda ) F  = \hat{\A} \rho
(x,0) \hat{\A }\inv F = F$, then $\hat{\A }\inv F$ is periodic, and
periodic distributions cannot be in $L^p(\rdd )$ for $p< \infty $.
Using  this observation, we sketch a different version  of
Proposition~\ref{prop:Phi_linear} under additional hypothesis. 

\begin{proposition} 
  Let $\Trep:\Sdd \to \Sddp$ be a non-zero linear operator
    satisfying  the covariance property $        \Trep \rho(\lambda)F
    = c \rho(\Phi(\lambda)) \Trep F$ for all $ \lambda\in\Rddd $ and
    all $F\in\Sdd$.

    Assume in addition that

    (i) either there exist $ F\in\Sdd $ with a non-zero
    image  $\Trep F
    \in L^p(\rdd )$ for some $p< \infty $,   or

    (ii) $\{\Trep F :  F\in\Sdd \}$ is a
    weak$^*$-dense subspace of $\Sddp $.

    Then $\Phi $ is an additive map. 
 \end{proposition}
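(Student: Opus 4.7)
The goal is to show that $\nu := \Phi(\lambda+\mu) - \Phi(\lambda) - \Phi(\mu) = 0$ for every $\lambda, \mu \in \Rddd$, which is exactly the additivity of $\Phi$. First, I would rewrite the identity in Lemma~\ref{lem:Phi_of_Sum} as an eigenvalue equation for $\rho(\nu)$. Multiplying
\begin{equation}
\rho(\Phi(\lambda+\mu))\Trep F \;=\; e^{\pi i([\lambda,\mu] - [\Phi(\lambda),\Phi(\mu)])}\,\rho(\Phi(\lambda)+\Phi(\mu))\Trep F
\end{equation}
on the left by $\rho(\Phi(\lambda)+\Phi(\mu))^{-1}$ and invoking the cocycle \eqref{eq:rho_of_sum}, one obtains
\begin{equation}\label{eq:plan-eig}
\rho(\nu)\,\Trep F \;=\; c(\lambda,\mu)\,\Trep F, \qquad F \in \Sdd,
\end{equation}
where the scalar $c(\lambda,\mu) \in \C$ does not depend on $F$. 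Thus every element of $\mathrm{ran}\,\Trep$ is an eigen-distribution of $\rho(\nu)$ with a common eigenvalue.

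The heart of the argument is a rigidity lemma for eigenfunctions of time-frequency shifts: if $G \in L^p(\rdd) \setminus \{0\}$ with $p < \infty$ satisfies $\rho(\nu) G = cG$ for some $c \in \C$, then $\nu = 0$. To prove this, I would write $\nu = (\nu_1, \nu_2) \in \rdd \times \rdd$ and compute the pointwise identity $|\rho(\nu) G(t)| = |G(t-\nu_1)|$. Combined with translation-invariance of the $L^p$-norm this forces $|c| = 1$ and hence $|G(t-\nu_1)| = |G(t)|$ a.e.; since no nonzero $L^p$-function (with $p<\infty$) is periodic, $\nu_1 = 0$. In the remaining case $\rho(\nu) = M_{\nu_2}$, the equation $e^{2\pi i \nu_2 \cdot t} G(t) = cG(t)$ admits no nonzero solution unless $\nu_2 = 0$, finishing the rigidity argument.

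Under hypothesis~(i) the rigidity lemma applies directly with $G = \Trep F \in L^p(\rdd) \setminus \{0\}$, yielding $\nu = 0$. Under hypothesis~(ii), I would first use weak$^*$-density to extend \eqref{eq:plan-eig} from $\mathrm{ran}\,\Trep$ to all of $\Sddp$: since $\rho(\nu)$ is weak$^*$-continuous on $\Sddp$ (cf.~\cite[Cor.~11.2.22]{Groechenig2001}) and $c(\lambda,\mu)$ is independent of $F$, passing to a weak$^*$-limit $G = \lim_\alpha \Trep F_\alpha$ gives $\rho(\nu) G = c(\lambda,\mu) G$ for every $G \in \Sddp$. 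Specializing to any nonzero $G_0 \in \Sdd \subset L^p(\rdd)$ then reduces~(ii) to~(i).

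The main delicate point is the rigidity lemma; conceptually it asserts that no nontrivial time-frequency shift has a nonzero $L^p$-eigenfunction, and its proof reduces to the elementary observation $|\rho(\nu) G| = |G(\cdot - \nu_1)|$. The one other subtle issue is the genuine $F$-independence of $c(\lambda,\mu)$ in \eqref{eq:plan-eig}, which is what allows the weak$^*$-limit in case~(ii) to commute with scalar multiplication; this is already built into the statement of Lemma~\ref{lem:Phi_of_Sum}.
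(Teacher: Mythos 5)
Your proposal is correct and follows essentially the same route as the paper: both reduce additivity to the observation that $\Trep F$ is an eigen-distribution of the time-frequency shift $\rho(\Phi(\lambda+\mu)-\Phi(\lambda)-\Phi(\mu))$, and then invoke the rigidity fact that a nontrivial time-frequency shift admits no nonzero $L^p$-eigenfunction for $p<\infty$, handling case (ii) by weak$^*$-density exactly as the paper does. The only cosmetic difference is that you prove the rigidity fact by the direct computation $|\rho(\nu)G|=|G(\cdot-\nu_1)|$ and the impossibility of nonzero periodic $L^p$-functions, whereas the paper conjugates by a metaplectic operator to reduce to a pure translation.
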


 Note that we do not assume that $\phi$ is measurable.  Assumption (i) is quite reasonable, because a \tfr\ of
Schwartz functions should be ``nice''. Assumption (ii) is very strong,
but difficult to verify. It  actually follows as a corollary from Theorem~\ref{thm:main}
provided that we assume the measurability of $\phi$. 

\begin{proof}[Sketch of proof]
  By Lemma \ref{lem:Phi_of_Sum}
  $$
  \rho(\Phi(\lambda_1+\lambda_2)-\Phi(\lambda_1)-\Phi(\lambda_2))
  \Trep F = c \Trep F
  $$
  with some phase factor $c$. Thus $\Trep F \neq 0 $ is an eigen-distribution
  of a \tfs . For  $\Trep F \in L^p(\rdd )$ this is only possible, if
  the \tfs\ is a multiple of the  identity, whence
  $\Phi(\lambda_1+\lambda_2)-\Phi(\lambda_1)-\Phi(\lambda_2)=0$  and
  thus    $\Phi$ is additive.

  If $\Trep (\Sd\otimes \Sd)$ spans a weak$^*$-dense subspace of
  $\Sddp$, then again the \tfs\ on the left-hand side must be a
  multiple of the identity, and $\Phi $  is additive.  
\end{proof}

\emph{Uniqueness.} Some arguments would simplify, if we knew that
the distortion function $\Phi $ were unique. 

\begin{proposition}\label{thm:main_uniqueV} 
    Let $\rep:\Sd\times \Sd\to \Sddp$ be a non-zero, bilinear,
    separately weak*-continuous mapping satisfying the general
    covariance property ~\eqref{eq:vertauschung}.
    Further, assume that $\Phi_\rep $ is unique. 
    Then there exist $a\in\C$, $a\neq 0$, and $\A\in\Sp$ such that
    \begin{equation} \label{metaform}
        \rep(f,g) = a\, \hat\A (f\otimes g),\qquad f,g\in\Sd.
    \end{equation}
  \end{proposition}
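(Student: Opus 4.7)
The plan is to follow the proof of Theorem~\ref{thm:main} through Lemmas~\ref{cor:rep_kernel}--\ref{lem:Phi_almost_uniq}, and then to replace Proposition~\ref{prop:Phi_linear} (which used measurability of $\Phi$, the Hewitt--Ross theorem, and Lie theory on $q_H\circ\Phi$) by a purely algebraic argument that exploits the uniqueness hypothesis. First I would invoke Lemma~\ref{lem:Trep_extension} to pass to the linearized operator $\Trep\colon\Sdd\to\Sddp$ together with its distortion function $\Phi\colon\Rddd\to\Rddd$; since the reordering in \eqref{eq:defPhi} is a bijection between $\Phi_\rep$ and $\Phi$, the uniqueness of $\Phi_\rep$ transfers directly to uniqueness of $\Phi$.

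The next step is to show that the projective kernel $H$ from \eqref{eq:def_H} is trivial. Any two valid distortion functions $\Phi_1,\Phi_2$ (with respective constants $c_1,c_2$) satisfy $c_1\rho(\Phi_1(\lambda))\Trep F = c_2\rho(\Phi_2(\lambda))\Trep F$ for every $F\in\Sdd$, which says that $\rho(\Phi_1(\lambda)-\Phi_2(\lambda))$ acts as a scalar on $\mathrm{ran}\,\Trep$; hence $\Phi_1(\lambda)-\Phi_2(\lambda)\in H$ for every $\lambda$, and the uniqueness hypothesis forces $H=\{0\}$. With $H=\{0\}$ in hand, Lemma~\ref{lem:Phi_of_Sum} upgrades to the identity $\Phi(\lambda+\mu)=\Phi(\lambda)+\Phi(\mu)$, so $\Phi$ is additive and hence $\Q$-linear; and the congruence $[\lambda,\mu]-[\Phi(\lambda),\Phi(\mu)]\in\Z$ from \eqref{eq:Z_preserved} can be scaled away (replace $\lambda$ by $n^{-1}\lambda$, $n\in\N$), yielding the exact identity $[\Phi(\lambda),\Phi(\mu)]=[\lambda,\mu]$ for all $\lambda,\mu\in\Rddd$.

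The genuinely new step is to promote additivity plus exact preservation of the non-degenerate symplectic form to $\R$-linearity. For $\alpha\in\R$ and $x\in\Rddd$, setting $z\coloneqq \Phi(\alpha x)-\alpha\Phi(x)$ and expanding both $[\Phi(\alpha x),\Phi(y)]=\alpha[x,y]$ and $[\alpha\Phi(x),\Phi(y)]=\alpha[x,y]$ shows that $z$ is $[\,,\,]$-orthogonal to every element of $\Phi(\Rddd)$. Since the Gram matrix $\bigl([\Phi(e_i),\Phi(e_j)]\bigr)_{i,j}=\J$ is invertible, the images $\Phi(e_1),\dots,\Phi(e_{4d})$ of the standard basis are $\R$-linearly independent and therefore span $\Rddd$ over $\R$; non-degeneracy of $[\,,\,]$ then forces $z=0$. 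Thus $\Phi(\lambda)=\A\lambda$ for some $\A\in\Sp$, and the final part of the proof of Theorem~\ref{thm:main} (setting $\Trep_0\coloneqq \hat\A^{-1}\Trep$, observing that it commutes with every time-frequency shift, and running the convolution--modulation argument to conclude that $\Trep_0$ is a scalar multiple of the identity) applies unchanged and delivers $\rep(f,g)=a\,\hat\A(f\otimes g)$.

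The main obstacle I anticipate is the second paragraph: the implication ``uniqueness of $\Phi_\rep$ $\Rightarrow$ $H=\{0\}$'' needs a careful reading of what \emph{unique} means, because a naive constant shift $\Phi\mapsto\Phi+h$ with $h\in H$ does not in general produce another valid distortion function unless one lets the constant $c$ absorb the $\lambda$-dependent correction $e^{\pi i[\Phi(\lambda),h]}c_h$. The cleanest interpretation is that $\Phi_\rep$ is uniquely determined as an $\Rddd$-valued function, so that the $H$-coset $\Phi_\rep(\lambda)+H$ is a singleton for every $\lambda$, which is exactly $H=\{0\}$.
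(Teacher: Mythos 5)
Your overall architecture is close to the paper's, and your third paragraph (the Gram--matrix/non-degeneracy argument showing that exact preservation of $[\cdot,\cdot]$ forces $\R$-homogeneity) is a genuinely useful elaboration of a step the paper only asserts. But there is a real gap in the second paragraph, exactly where you suspected one: the implication ``uniqueness of $\Phi_\rep$ $\Rightarrow$ $H=\{0\}$'' does not follow, and your resolution by ``interpretation'' is not a proof. Uniqueness says there is only \emph{one} valid distortion function; to extract $H=\{0\}$ from that you would need the converse direction, namely that every nonzero $h\in H$ actually produces a \emph{second} valid distortion function. It does not: if you perturb $\Phi$ to $\Phi(\lambda)+h$, then by \eqref{eq:rho_of_sum}
\begin{equation}
  \rho(\Phi(\lambda)+h)\Trep F \;=\; e^{\pi i [\Phi(\lambda),h]}\, c_h\, \rho(\Phi(\lambda))\Trep F,
\end{equation}
and the scalar $e^{\pi i [\Phi(\lambda),h]}c_h$ depends on $\lambda$, so the perturbed function need not satisfy \eqref{eq:vertausch_Trep_extend} with any fixed constant (indeed, if $c_h$ is not a root of unity and $[\Phi(\cdot),h]\equiv 0$, no multiple of $h$ ever yields a valid perturbation at any point). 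Since your route needs $H=\{0\}$ to upgrade Lemma~\ref{lem:Phi_of_Sum} to additivity, and additivity to get $\Q$-homogeneity, and $\Q$-homogeneity to scale away the congruence \eqref{eq:Z_preserved}, the whole chain stalls at this first link.

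The paper sidesteps $H$ entirely and uses uniqueness in a different, legitimate way: since $\rho(n\lambda)=\rho(\lambda)^n$, the function $\lambda\mapsto n\Phi(\lambda/n)$ is again a valid distortion function \emph{with the same constant} (no stray phases arise because one is iterating the covariance identity itself, not translating inside $\rho$), so uniqueness gives $\Phi(n\lambda)=n\Phi(\lambda)$ and hence $\Q$-homogeneity directly. That alone lets one scale \eqref{eq:Z_preserved} to get $[\Phi(\lambda),\Phi(\mu)]=[\lambda,\mu]$ exactly, and your own non-degeneracy argument then delivers \emph{both} additivity (apply it to $z=\Phi(\lambda+\mu)-\Phi(\lambda)-\Phi(\mu)$, which is $[\cdot,\cdot]$-orthogonal to the spanning set $\Phi(e_1),\dots,\Phi(e_{4d})$) \emph{and} $\R$-homogeneity, with no reference to $H$. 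So the fix is a reordering: uniqueness $\Rightarrow$ $\Q$-homogeneity $\Rightarrow$ exact form preservation $\Rightarrow$ linearity, rather than uniqueness $\Rightarrow$ $H=\{0\}$ $\Rightarrow$ additivity. You should also record the small normalization $c=1$ (iterating $\Trep\rho(\lambda)^nF=c^n\rho(\Phi(\lambda))^n\Trep F$ against $\Trep\rho(n\lambda)F=c\,\rho(\Phi(n\lambda))\Trep F$ forces $c^n=c$), which the paper invokes when it drops the constant in this computation. Your final step (passing to $\Trep_0=\hat\A^{-1}\Trep$ and the convolution--modulation argument) is fine as stated.
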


\begin{proof}[Proof sketch]
The only part where measurability plays a role in the proof above is
related to Proposition \ref{prop:Phi_linear}. We indicate how to use
uniqueness instead. 
The rest of the proof is the same as the proof of Theorem
\ref{thm:main}. A simple argument, which we omit, shows that the
constant in~\eqref{eq:vertauschung} can be taken to be $c=1$.

Now for $\lambda \in \Rddd $, $n\in \bN $, and $F\in \Sdd $  we have
\begin{align*}
   \rho (\Phi (n\lambda )) \Trep F & =  \Trep \rho(n\lambda)F = \rho
                                     (\Phi (\lambda )) \Trep \phi
                                     \big((n-1)\lambda \big)F   =
                                     \dots \\
    & = \rho (\Phi (\lambda ))^n \Trep F = \rho \big( n (\Phi
      (\lambda )\big) \Trep F.
\end{align*}
By uniqueness of $\Phi $ we obtain
$$
\Phi (n\lambda ) = n \Phi (\lambda ) \, ,
$$
which implies even that $\Phi $ is $\bQ $-homogeneous.

We now  apply  \eqref{eq:Z_preserved}, where the measurability of $\Phi $ is
not used,   and obtain for all $m,n\in\Z$
\begin{equation}
    2^{m+n}([\lambda,\mu]-[\Phi(\lambda), \Phi(\mu)]) = [2^m\lambda,2^n\mu]-[\Phi(2^n\lambda), \Phi(2^n \mu)]\in\Z.
\end{equation}
This is equivalent to 
\begin{equation}
    [\lambda,\mu]-[\Phi(\lambda), \Phi(\mu)] = 0,
\end{equation}
which already implies that $\Phi$ is a linear symplectic map. 

The rest of the proof is identical to the Proof of Theorem \ref{thm:main}.
\end{proof}

\end{document}